\setlist[enumerate,1]{label={\upshape(\roman*)}}
\theoremstyle{plain}
\newtheorem{thm}{Theorem}[section]
\newtheorem{lem}[thm]{Lemma}
\newtheorem{cor}[thm]{Corollary}
\theoremstyle{definition}
\newtheorem{df}[thm]{Definition}
\newcommand{\Frame}{\mathcal{F}(n)}
\newcommand{\Window}{\mathcal{W}(n)}
\title{Convex subgraphs and spanning trees of the square cycles}
\author[Munemasa]{Akihiro Munemasa}
\address{Graduate School of Information Sciences, Tohoku University, Sendai, 980-8579, Japan}
\email{munemasa@math.is.tohoku.ac.jp}
\author[Tanaka]{Yuuho Tanaka$^\ast$}
\address{Graduate School of Science and Engineering, Waseda University, Tokyo, 169-8555, Japan}
\email{tanaka\_yuuho\_dc@akane.waseda.jp}
\thanks {*Corresponding author}
\date{February 13, 2013}
\begin{document}

\keywords{spanning tree, square cycle, circulant graph, Fibonacci sequence}
\subjclass[2010]{05C05,05C70,05C10}

\begin{abstract}
We classify connected spanning convex subgraphs of the square cycles.
We then show that every spanning tree of $C_n^2$ is contained in a unique nontrivial connected spanning convex subgraph of $C_n^2$.
As a result, we obtain a purely combinatorial derivation of the formula for the number of spanning trees of the square cycles.
\end{abstract}
\maketitle

\section{Introduction}
It is well known that the number $t(G)$ of spanning trees of a connected graph $G$ can be computed using the matrix-tree theorem
(see e.g., \cite[Section~13.2]{GR}).
More precisely, $t(G)$ is the product of
nonzero eigenvalues of the Laplacian of $G$, divided by the number of vertices of $G$. 
For families of graphs whose Laplacian eigenvalues can be computed, this method is very useful in computing $t(G)$, except that
the results sometimes need to be simplified  since eigenvalues may not be rational integers.
Extensive work has been done to simplify the formula for $t(G)$ for circulant graphs
(see \cite{Med,Med2,Yong}).
For example, the derivation of the number 
$t(C_n^2)$ of spanning trees of the square cycle with $n$ vertices 
using the matrix-tree theorem was done first by Baron et al.~\cite{Baron}.
Kleitman and  Golden \cite{Kleitman} 
used a different approach to compute $t(C_n^2)$.
Namely, they used topological properties of a planar embedding of $C_n^2$ to derive a formula
for $t(C_n^2)$
when $n$ is even, and
mentioned that a similar method can be used to derive the same formula for odd $n$, without giving details.
If $n$ is even, $C_n^2$ is isomorphic to the rose window graph $R_{{n}/{2}}(1,1)$ \cite{Wilson}.
 The graph $C_n^2$ is also
denoted by $C_n(1,2)$ \cite{Med} and $C_n^{1,2}$ \cite{Yong}.



In this paper, 
we transform the topological argument given by
Kleitman and Golden \cite{Kleitman} to a purely combinatorial one, using the theory of graph homotopy \cite{Lewis}.
This allows us to give a uniform proof of the formula for $t(C_n^2)$ independent of the parity of $n$.
The key idea in our proof is the fact that every spanning tree of $C_n^2$ is contained in a unique nontrivial connected spanning
convex subgraph. Although this fact appeared implicitly in \cite{Kleitman} when $n$ is even, the classification of connected convex subgraphs of $C_n^2$ is new.


The organization of this paper is as follows.
In Section~\ref{sec:2}, we fix notation for the square cycles as circulant graphs,
and give some properties of the Fibonacci sequence.
We give a classification of connected spanning convex subgraphs of $C_n^2$ in Section~\ref{sec:3}.
In Section~\ref{sec:4}, we show that the set of the spanning trees of $C_n^2$
coincides with the disjoint union of the set of the spanning trees of strip graphs with tails $S_{n,k,j}$, defined in Section~\ref{sec:2}.
As a consequence, we deduce a combinatorial  proof of the formula for $t(C_n^2)$
which does not depend on the parity of $n$.

\section{Preliminaries}\label{sec:2}
\begin{df}
A graph that is connected and has no closed paths is called a \emph{tree}.
For a graph $G$, we say that $G'$ satisfying
\[
E(G')\subseteq E(G),V(G)=V(G')
\]
is a \emph{spanning subgraph} of $G$.
If a spanning subgraph $G'$ in a connected graph $G$ is a tree, then $G'$ is called a \emph{spanning tree} of the graph $G$.
\end{df}

\begin{df}
Let $n$ be an integer with $n\geq5$.
The \emph{square cycle} $C_n^2$ is defined by $V(C^2_n)= \mathbb{Z}_n=\mathbb{Z}/n\mathbb{Z}$, $E(C^2_n)=\{\{v_i,v_j\}\mid v_i,v_j\in V(C_n^2),\;i,j\in\mathbb{Z},\;i-j=1,2\}$, where $v_i=i+n\mathbb{Z}\in\mathbb{Z}_n$.
\end{df}

Let $n$ be an integer with $n\geq5$.
Then, $E(C_n^2)=\{e_i \mid i\in\mathbb{Z}\}\cup\{f_i\mid i\in\mathbb{Z}\}$, where we define \emph{frame} $e_i$ and \emph{window} $f_i$ as follows.
\[
e_i=\{v_i,v_{i+1}\},\;f_i=\{v_i,v_{i+2}\} \quad (i\in\mathbb{Z}).
\]
We denote by $\Window$ and $\Frame$ the set of frames and windows, respectively as follows.
\begin{align*}
\Window&=\{f_i\mid 0\leq i\leq n-1\},\\
\Frame&=\{e_i \mid 1\leq i\leq n\}.
\end{align*}

By a \emph{triangle} of $C_n^2$ we mean a set
\[
T_i=\{e_i,e_{i+1},f_i\} \quad \text{($i\in\mathbb{Z}$).}
\]
Then,
\[
E(C_n^2)=\bigcup_{i=0}^{n-1}T_i.
\]

\begin{df}
Given $i$ ($i\in\mathbb{Z}$), if a subgraph $G$ of $C_n^2$ satisfies $|T_i\cap E(G)|\leq1$ or $T_i\subseteq E(G)$, then $G$ is said to be \emph{convex} with respect to the triangle $T_i$.
A subgraph $G$ of $C_n^2$ is said to be \emph{convex} if $G$ is convex with respected to $T_i$ for all $i$ ($i\in\mathbb{Z}$).
\end{df}

\begin{df}
The graph $S_k$ defined by $V(S_k)=\{1,2,\dots,k\}$,
$E(S_k)=\{\{i,j\} \mid i,j\in V(S_k),1\leq |i-j|\leq 2\}$ 
is called a \emph{strip graph}.
\end{df}

The sequence of numbers $F_n$ defined by the recurrence relation $F_0=0,F_1=1,F_{n+2}=F_{n+1}+F_n$
($n=0,1,2,\dots$) is called the \emph{Fibonacci sequence}.
The following two lemmas are due to 
Kleitman and Golden \cite{Kleitman}.

\begin{lem}
\label{lem:kleitman1}
For $n\geq2$, $t(S_n)=F_{2n-2}$.
\end{lem}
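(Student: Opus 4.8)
Proof proposal for Lemma \ref{lem:kleitman1} ($t(S_n) = F_{2n-2}$ for $n \geq 2$).

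The plan is to set up a recurrence for $a_n := t(S_n)$ and show it matches the recurrence satisfied by $F_{2n-2}$. Write $a_n$ for the number of spanning trees of the strip graph $S_n$ on vertices $\{1,\dots,n\}$, and let me also track an auxiliary count. First I would observe that $S_2$ is a single edge, so $a_2 = 1 = F_2$, and $S_3$ is a triangle, so $a_3 = 3 = F_4$. For the inductive step, consider the edges incident to the extreme vertex $n$: these are $\{n-1,n\}$ and $\{n-2,n\}$. In a spanning tree exactly the spanning trees split according to how vertex $n$ is attached. I would condition on the edge set restricted to the triangle $\{n-2,n-1,n\}$ (with edges $\{n-2,n-1\}$, $\{n-1,n\}$, $\{n-2,n\}$): a spanning tree uses exactly two of the three "rungs" near the end, or uses $\{n-1,n\}$ together with a structure on $S_{n-1}$, etc. Concretely, deletion–contraction on the edge $\{n-2,n\}$ gives $a_n = t(S_n \setminus \{n-2,n\}) + t(S_n / \{n-2,n\})$; the first term, after noting $n$ becomes a leaf attached by $\{n-1,n\}$, equals $a_{n-1}$, and the second term is the number of spanning trees of a graph on $n-1$ vertices that is $S_{n-1}$ with a doubled edge $\{n-2,n-1\}$, which is $2\,a_{n-1} - (\text{spanning trees of } S_{n-1}\text{ avoiding the edge }\{n-2,n-1\})$, and the latter count I would identify with $a_{n-2}$ by a short argument (if that edge is avoided, vertex $n-1$ must attach via $\{n-3,n-1\}$, and $n-2$ attaches into $S_{n-2}$). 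Assembling these yields $a_n = 3a_{n-1} - a_{n-2}$.

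The remaining step is to check that $F_{2n-2}$ satisfies the same recurrence $b_n = 3b_{n-1} - b_{n-2}$ with the same initial values. This is a standard Fibonacci identity: $F_{2m} = 3F_{2m-2} - F_{2m-4}$, which follows from $F_{2m} = F_{2m-1} + F_{2m-2} = (F_{2m-2}+F_{2m-3}) + F_{2m-2} = 2F_{2m-2} + F_{2m-3}$ and $F_{2m-3} = F_{2m-2} - F_{2m-4}$. Since $a_2 = F_2 = 1$ and $a_3 = F_4 = 3$ match, induction on $n$ completes the proof.

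The main obstacle I anticipate is the bookkeeping in the deletion–contraction step: correctly identifying the graph $S_n / \{n-2,n\}$ and the count of spanning trees of $S_{n-1}$ that avoid the edge $\{n-2,n-1\}$. It may be cleaner to avoid deletion–contraction entirely and instead argue directly by transfer-matrix / combinatorial bijection: classify a spanning tree of $S_n$ by the trace it leaves on the last triangle, partition the spanning trees into those in which $\{n-1,n\}$ is present and those in which it is absent, and match each class with spanning-tree counts of $S_{n-1}$ and $S_{n-2}$ respectively. Either route reduces the lemma to the elementary Fibonacci recurrence above; the real content is the two-term recurrence for $t(S_n)$, and once that is in hand the conclusion is immediate.
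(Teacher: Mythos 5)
Your proposal is correct. Note that the paper itself gives no proof of this lemma (it is quoted from Kleitman and Golden), so your argument is a self-contained replacement rather than a variant of anything in the text. The deletion--contraction step checks out: deleting $\{n-2,n\}$ makes $n$ a leaf forced onto $\{n-1,n\}$, giving $t(S_{n-1})$; contracting $\{n-2,n\}$ yields $S_{n-1}$ with the edge $\{n-2,n-1\}$ doubled, whose spanning-tree count is $2\,t(S_{n-1})$ minus the number of spanning trees of $S_{n-1}$ avoiding $\{n-2,n-1\}$, and that last count is indeed $t(S_{n-2})$ since avoiding that edge forces $n-1$ to be a leaf on $\{n-3,n-1\}$. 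This gives $a_n=3a_{n-1}-a_{n-2}$ for $n\geq 4$, and together with $a_2=1=F_2$, $a_3=3=F_4$ and the identity $F_{2m}=3F_{2m-2}-F_{2m-4}$ the induction closes. The only point worth making explicit in a final write-up is the multigraph convention (a spanning tree using the doubled edge is counted twice), which you use implicitly in the expression $2a_{n-1}-(\text{trees avoiding the edge})$.
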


\begin{lem}
\label{fib3}
For $n\geq2$,
\[
F_n^2=\begin{cases}
\sum_{k=0}^{(n-2)/2}F_{4k+2} &\text{if $n$ is even,}\\
1+\sum_{k=1}^{(n-1)/2}F_{4k} &\text{if $n$ is odd.}
\end{cases}
\]
\end{lem}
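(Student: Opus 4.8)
The plan is to reduce the identity to a single telescoping sum coming from the difference formula $F_n^2-F_{n-2}^2=F_{2n-2}$, valid for all $n\ge 2$. First I would establish this difference formula. Factoring and using $F_n-F_{n-2}=F_{n-1}$ gives $F_n^2-F_{n-2}^2=(F_n-F_{n-2})(F_n+F_{n-2})=F_{n-1}(F_n+F_{n-2})$. On the other hand, the Fibonacci addition formula $F_{a+b}=F_{a+1}F_b+F_aF_{b-1}$ (which itself follows by a short induction on $b$ from $F_{m+2}=F_{m+1}+F_m$) applied with $a=b=n-1$ gives $F_{2n-2}=F_nF_{n-1}+F_{n-1}F_{n-2}=F_{n-1}(F_n+F_{n-2})$. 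Comparing the two expressions yields $F_n^2-F_{n-2}^2=F_{2n-2}$.

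Next I would telescope, splitting according to the parity of $n$. If $n$ is even, then
\[
F_n^2=F_0^2+\sum_{i=1}^{n/2}\bigl(F_{2i}^2-F_{2i-2}^2\bigr)=\sum_{i=1}^{n/2}F_{4i-2},
\]
since $F_0=0$ and $F_{2i}^2-F_{2i-2}^2=F_{2(2i)-2}=F_{4i-2}$; the substitution $k=i-1$ rewrites the right-hand side as $\sum_{k=0}^{(n-2)/2}F_{4k+2}$, which is the claimed even case. If $n$ is odd, I would instead start the telescope from $F_1$:
\[
F_n^2=F_1^2+\sum_{i=1}^{(n-1)/2}\bigl(F_{2i+1}^2-F_{2i-1}^2\bigr)=1+\sum_{i=1}^{(n-1)/2}F_{4i},
\]
using $F_1=1$ and $F_{2i+1}^2-F_{2i-1}^2=F_{2(2i+1)-2}=F_{4i}$, which is exactly the odd case.

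There is no serious obstacle here: once the difference formula $F_n^2-F_{n-2}^2=F_{2n-2}$ is available, the result is a routine telescoping computation, and the only thing demanding care is keeping the two parity cases, the base terms ($F_0^2=0$ versus $F_1^2=1$), and the reindexing of the summation variable straight. The one ingredient that genuinely needs justification rather than mere assertion is the difference formula itself; if one prefers to avoid invoking the general addition formula, $F_n^2-F_{n-2}^2=F_{2n-2}$ can alternatively be proved directly by induction on $n$ from the recurrence $F_{n+2}=F_{n+1}+F_n$ alone, which is the self-contained route I would actually write out.
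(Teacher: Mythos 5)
Your proof is correct: the difference identity $F_n^2-F_{n-2}^2=F_{2n-2}$ follows exactly as you say from $F_n-F_{n-2}=F_{n-1}$ together with the addition formula at $a=b=n-1$, and the two telescoping sums (anchored at $F_0^2=0$ for even $n$ and $F_1^2=1$ for odd $n$) reproduce the stated formulas after the reindexing $k=i-1$. The paper itself gives no proof of this lemma --- it is simply attributed to Kleitman and Golden --- so there is nothing to compare against; your self-contained derivation is a perfectly good way to fill that gap.
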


The following substructures appeared implicitly 
in \cite{Kleitman}. In fact, an escape route
is the set of edges crossed by a path from the
interior to the outside region,  in the planar  drawing
of $C_n^2$ (see \cite[Fig.~4]{Kleitman}).
The removal of an escape route gives a strip graph
with tails (see \cite[Fig.~5]{Kleitman}).

\begin{df}
Let $n\geq5$. For integers $j$ and $k$ with
$0\leq k\leq \lceil\frac{n-2}{2}\rceil$, we define 
the graph $S_{n,k,j}$ as follows:
\begin{align*}
V(S_{n,k,j})&=V(C_n^2),\\
E(S_{n,k,j})&=E(C_n^2)\setminus ES(n,k,j), 
\quad (j,k\in\mathbb{Z},\;0\leq k\leq \lceil\frac{n-2}{2}\rceil),
\end{align*}
where
\[
ES(n,k,j)=\{f_j,f_{j+2k+1}\}\cup\{e_{j+1},\dots,e_{j+2k+1}\} \quad (j,k\in\mathbb{Z},\;0\leq k\leq \lceil\frac{n-2}{2}\rceil).
\]
The graph $S_{n,k,j}$ is called a \emph{strip graph with tails}, and $ES(n,k,j)$ is called the \emph{escape route}.
\end{df}

The graphs $S_{n,k,j}$ are connected spanning convex subgraphs of $C_n^2$.
Clearly, $C_n^2$ and $(\mathbb{Z}_n,\Window)$ for $n$ odd are also connected spanning subgraphs of $C_n^2$, and we call these subgraphs \emph{trivial} connected spanning subgraphs.

For a graph $G$, let $T_G$ be the set of all spanning trees of $G$.
Then $t(G)=|T_G|$.
Since $S_{n,k,j}$ can be obtained from the strip graph $S_{n-2k}$ by attaching two tails of length $k$, the following lemma holds.

\begin{lem} \label{lem:strip_tail}
For $j,k\in\mathbb{Z},\;0\leq k\leq \lceil\frac{n-2}{2}\rceil$, $t(S_{n,k,j})=t(S_{n-2k})$.
\end{lem}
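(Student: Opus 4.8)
The plan is to exhibit an explicit edge-bijection between the spanning trees of $S_{n,k,j}$ and those of $S_{n-2k}$, or equivalently to contract the two ``tails'' one edge at a time and argue that each contraction preserves the spanning-tree count because the contracted edge is a bridge. First I would recall precisely what the tails are: after deleting the escape route $ES(n,k,j)=\{f_j,f_{j+2k+1}\}\cup\{e_{j+1},\dots,e_{j+2k+1}\}$ from $C_n^2$, the vertices $v_{j+1},v_{j+2},\dots,v_{j+2k}$ each lose all but one or two of their incident edges. Concretely, $v_{j+1}$ is now joined only to $v_{j-1}$ (via $f_{j-1}$) and to $v_{j+3}$ (via $f_{j+1}$), and similarly along the window-edges $f_{j+1},f_{j+3},\dots$ on one side and $f_{j+2},f_{j+4},\dots$ on the other, while the frame edges $e_{j+1},\dots,e_{j+2k+1}$ are gone. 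So the induced structure on $v_{j},v_{j+1},\dots,v_{j+2k+1}$ together with its attachment to the rest is exactly a path of windows hanging off each of the two ``endpoints'' $v_j$ and $v_{j+2k+1}$ of the remaining strip; these are the two tails of length $k$, and removing them (i.e.\ contracting the pendant paths) leaves $S_{n-2k}$ on the vertex set $\{v_{j+2k+1},v_{j+2k+2},\dots,v_{j+n-1},v_j\}$ relabelled $1,2,\dots,n-2k$.

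The key steps, in order, are: (1) Identify, for $0\le k$, the $2k$ vertices $v_{j+1},\dots,v_{j+2k}$ and check that in $S_{n,k,j}$ they form two induced paths $v_j - v_{j+2} - v_{j+4}-\cdots$ and $v_{j+2k+1}-v_{j+2k-1}-\cdots$ (using only window edges), each attached to the ``core'' at a single vertex; this is where the arithmetic on the indices $f_{j+1},\dots$ versus the deleted $f_j,f_{j+2k+1}$ must be done carefully, and it is the main obstacle because one must verify that no extra edge survives to connect a tail vertex to two different core vertices (which would create a cycle rather than a pendant path). (2) Observe that each edge of a pendant path is a bridge, and recall the standard fact that contracting a bridge $e$ in a connected graph $G$ gives $t(G/e)=t(G)$ (every spanning tree of $G$ must contain every bridge, and contracting it is a bijection on spanning trees). (3) Contract the $2k$ tail edges one at a time to obtain $t(S_{n,k,j})=t(G')$ where $G'$ is the graph on the core $n-2k$ vertices. (4) Check that $G'$ is isomorphic to $S_{n-2k}$: the core vertices, listed cyclically as $v_{j+2k+1},\dots,v_j$ and then around, inherit exactly the frame and window edges that make up a strip graph on $n-2k$ vertices, with the former ``wrap-around'' edges of $C_n^2$ now being the edges $\{1,2\},\{1,3\},\{2,3\}$-type ends of the strip — here I would note that the deletion of $f_j$ and $f_{j+2k+1}$ is precisely what opens the cycle into a strip, and that $t(S_m)=F_{2m-2}$ from Lemma~\ref{lem:kleitman1} makes the target well-defined for $m=n-2k\ge 2$, which holds since $k\le\lceil\frac{n-2}{2}\rceil$.

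The hard part will be the bookkeeping in step (1): making the index set of surviving window edges on the two tails precise and confirming the ``single point of attachment'' claim, especially near the junctions $v_j$ and $v_{j+2k+1}$ and in the edge case $k=0$ (where $S_{n,0,j}$ just has two windows $f_j,f_{j+1}$... wait, $f_j$ and $f_{j+1}$ deleted? — no, only $f_j,f_{j+1}$... for $k=0$, $ES=\{f_j,f_{j+1}\}\cup\{e_{j+1}\}$, giving $S_{n,0,j}\cong S_n$ with $t(S_n)=t(S_{n-0})$, consistent). Once the pendant-path description is nailed down, steps (2)–(4) are routine: the bridge-contraction fact is standard, and the final isomorphism with $S_{n-2k}$ is a direct matching of edge sets. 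I would therefore devote most of the written proof to step (1) and dispatch the rest in a sentence or two.
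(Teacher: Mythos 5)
Your plan is correct and is essentially the paper's argument made explicit: the paper simply observes that $S_{n,k,j}$ is obtained from $S_{n-2k}$ by attaching two pendant tails of length $k$, and (implicitly) that every spanning tree must contain every pendant/bridge edge, which is exactly your contraction argument. Only note that your tail indices are off by one --- the tail vertices are $v_{j+2},\dots,v_{j+2k+1}$, attached to the strip endpoints $v_{j+1}$ and $v_{j+2k+2}$ (in particular $v_{j+1}$ retains the frame $e_j$, so it has degree $3$ and lies on the core, not on a tail) --- a bookkeeping point you already flagged as the part needing care.
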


\section{Spanning convex subgraphs}\label{sec:3}

In this section, we prove our first main result which gives a classification of connected spanning convex subgraphs of $C_n^2$.

\begin{lem} \label{lem:pn}
Let $G$ be a connected spanning convex subgraph of $C_n^2$.
If $k$ and $p$ are integers with $0\leq p<n$ and
\begin{equation} \label{lemeq:k}
\{e_{k-1},f_k,f_{k+2},\dots,f_{k+2p-2},e_{k+2p}\}\subseteq E(G),
\end{equation}
then $\{e_k,e_{k+1},\dots,e_{k+2p-1}\}\subseteq E(G)$.
\end{lem}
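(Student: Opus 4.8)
The plan is to exploit the hypothesis that $G$ is convex with respect to every triangle $T_i$, together with the explicit list of edges in \eqref{lemeq:k}, to force all the frames $e_k,\dots,e_{k+2p-1}$ to lie in $E(G)$, essentially by a local ``filling in'' argument that propagates along the strip. I would first handle the base case $p=0$ (where the claimed conclusion is vacuous, so there is nothing to prove) and $p=1$, and then proceed by induction on $p$, or equivalently by a direct left-to-right sweep over the indices $k, k+2, k+4, \dots$.

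The core observation is the following: suppose at some stage we know $e_{k+2m}\in E(G)$ and also $f_{k+2m}\in E(G)$ (for $0\le m\le p-1$; when $m=0$ the first fact is the hypothesis $e_{k-1}$'s neighbour... more precisely $e_{k-1}\in E(G)$ lets us start the triangle $T_{k-1}=\{e_{k-1},e_k,f_{k-1}\}$ one step earlier — I'll align the indexing carefully). Consider the triangle $T_{k+2m}=\{e_{k+2m},e_{k+2m+1},f_{k+2m}\}$. Since both $e_{k+2m}$ and $f_{k+2m}$ belong to $E(G)$, we have $|T_{k+2m}\cap E(G)|\ge 2$, so convexity with respect to $T_{k+2m}$ forces $T_{k+2m}\subseteq E(G)$; in particular $e_{k+2m+1}\in E(G)$. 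Next consider $T_{k+2m+1}=\{e_{k+2m+1},e_{k+2m+2},f_{k+2m+1}\}$. We have just shown $e_{k+2m+1}\in E(G)$, and from \eqref{lemeq:k} we have $f_{k+2m+2}\in E(G)$ for $m\le p-2$ (and $e_{k+2p}\in E(G)$ handles the last step when $m=p-1$). Now I would look at $T_{k+2m+2}$ or argue that $f_{k+2m}\in E(G)$ propagates: actually the cleanest version is to note that from $f_{k+2m}\in E(G)$ and $e_{k+2m}\in E(G)$ we get $e_{k+2m+1}\in E(G)$ via $T_{k+2m}$, and then from $e_{k+2m+1}\in E(G)$ and $f_{k+2m+2}\in E(G)$ (a given) we want $e_{k+2m+2}\in E(G)$, which would come from convexity with respect to the triangle containing both, namely... we need to check which triangle contains $e_{k+2m+1}$ and $f_{k+2m+2}$ — it is $T_{k+2m+1}$ that contains $e_{k+2m+1}$ and $T_{k+2m+2}$ that contains $f_{k+2m+2}$, so a single triangle does not see both directly, and I must instead chain through the overlapping triangles $T_{k+2m+1}$ and $T_{k+2m+2}$, using the windows $f_{k+2m+1},f_{k+2m+3},\dots$ only via the frames. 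The right bookkeeping is: the frames $e_k,e_{k+1},\dots$ appear consecutively, and each triangle $T_i$ shares $e_{i+1}$ with $T_{i+1}$, so once one frame is in $E(G)$ and the adjacent window is in $E(G)$, the next frame is forced, and the given windows $f_k,f_{k+2},\dots,f_{k+2p-2}$ together with the given terminal frames $e_{k-1},e_{k+2p}$ supply exactly the alternating data needed to walk the chain from one end to the other.

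The main obstacle — and the step I expect to require the most care — is getting the index arithmetic exactly right so that the alternating pattern of given windows ($f$ at even offsets) and the two given frames ($e_{k-1}$ at the left, $e_{k+2p}$ at the right) meshes with the fact that consecutive triangles share a frame but not a window. Concretely, I anticipate that the clean statement is: for each $i$ with $k-1\le i\le k+2p-1$, convexity with respect to $T_i$ upgrades ``$\ge 2$ edges of $T_i$ in $G$'' to ``$T_i\subseteq E(G)$'', and I must verify that starting from $e_{k-1}\in E(G)$ and using the windows $f_k,f_{k+2},\dots$ I can feed two-edge conditions into $T_{k-1},T_k,T_{k+1},\dots$ in order. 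I would therefore set up a clean induction: the inductive statement is ``$e_{k-1},e_k,\dots,e_{k+2m-1}\in E(G)$ and $e_{k+2m}\in E(G)$'' for $0\le m\le p$, the base case $m=0$ being the hypothesis, and the inductive step using exactly two triangles $T_{k+2m}$ and $T_{k+2m+1}$ — the first giving $e_{k+2m+1}$ from $e_{k+2m}$ and $f_{k+2m}$, and then, to get $e_{k+2m+2}$, noting that either $m+1\le p-1$ so $f_{k+2m+2}$ is given and... here again the sharing issue bites, so I would instead realize that what actually propagates is: $e_{k+2m+1}\in E(G)$ plus $f_{k+2m}\in E(G)$ is not in a common triangle, but $e_{k+2m+1}$ plus the \emph{next} available window is, provided we also invoke the window $f_{k+2m+1}$ — which is \emph{not} in our list. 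Reconciling this will force me to use the convexity conditions in both directions (``$\le 1$ edge or all three''), possibly reading some of them contrapositively, and to confirm that no edge outside the claimed set is being asserted. Once the indexing is pinned down, each individual implication is a one-line application of the definition of convexity with respect to a single triangle, so the proof will be short; the genuine content is entirely in the careful chaining.

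\begin{rem}
In writing the final version I would replace the informal ``sweep'' above with a formal induction on $p$, stating the hypothesis \eqref{lemeq:k} with $p$ replaced by $p-1$ and noting that $e_{k+2p-2}$ in place of $e_{k+2p}$ is supplied by the inductive conclusion together with the triangle $T_{k+2p-2}$; this reduces the whole argument to the single inductive step described above and avoids any ambiguity about the endpoints.
\end{rem}
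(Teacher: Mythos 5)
Your propagation strategy cannot work, and the obstruction is structural rather than a matter of index bookkeeping. The hypothesis \eqref{lemeq:k} never places two of the given edges in a common triangle: $e_{k-1}$ lies only in $T_{k-2}$ and $T_{k-1}$ while $f_k$ lies only in $T_k$, so the sweep cannot start; and after producing $e_{k+2m+1}$ you would need the odd-offset window $f_{k+2m+1}$, which is not in the given list, so the sweep cannot continue. Worse, the edge set consisting of exactly the hypothesis edges (with none of $e_k,\dots,e_{k+2p-1}$ and none of $f_{k-1},f_{k+1},\dots,f_{k+2p-1}$) meets each triangle $T_i$ with $k-1\le i\le k+2p-1$ in at most one edge, hence is convex with respect to all of them while containing none of the claimed frames. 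So the conclusion is simply not a consequence of convexity alone, and no amount of chaining through overlapping triangles can produce it. The hypothesis you never invoke --- that $G$ is a \emph{connected spanning} subgraph --- is the essential missing ingredient.

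The paper's proof uses it as follows. Induct on $p$. If some $e_{k+i}$ with $0\le i\le 2p-1$ lies in $E(G)$, then convexity of one triangle supplies a neighbouring frame (e.g.\ for $i$ even, $T_{k+i}$ contains $e_{k+i}$ and $f_{k+i}$, forcing $e_{k+i+1}\in E(G)$), and the configuration splits into two shorter instances of \eqref{lemeq:k}, handled by induction. Otherwise assume $e_k,\dots,e_{k+2p-1}\notin E(G)$: convexity read contrapositively on $T_{k-1}$ (which contains $e_{k-1}\in E(G)$ and $e_k\notin E(G)$) forces $f_{k-1}\notin E(G)$, and likewise $T_{k+2p-1}$ forces $f_{k+2p-1}\notin E(G)$; but then every edge joining $\{v_{k+1},v_{k+3},\dots,v_{k+2p-1}\}$ to its complement is absent, contradicting connectedness of $G$. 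Your closing remark correctly anticipates reading convexity contrapositively, but without the connectivity contradiction the argument has no way to close.
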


\begin{proof}
We prove the assertion by induction on $p$.
If $p=0$, then it is trivial.
Therefore, we may assume that $p\geq1$.

Suppose that there exists an integer $i$ with $0\leq i\leq 2p-1$ such that $e_{k+i}\in E(G)$.
If $i$ is even, then since $G$ is convex with respected to $T_{k+i}$, $e_{k+i+1}\in E(G)$.
Therefore, we can apply the induction to $\{e_{k-1},f_k,f_{k+2},\dots,f_{k+i-2},e_{k+i}\}$ and $\{e_{k+i+1},f_{k+i+2},f_{k+i+4},\dots,f_{k+2p-2},e_{k+2p}\}$.
Similarly, if $i$ is odd, then 
we can apply the induction. 

It remains to derive a contradiction by assuming
\begin{equation} \label{eq:k3}
e_{k},e_{k+1},\dots,e_{k+2p-1}\notin E(G).
\end{equation}
Since $G$ is convex with respect to $T_{k-1}$,
\begin{equation} \label{eq:k1}
f_{k-1}\notin E(G).
\end{equation}
Similarly, since $G$ is convex with respect to
$T_{k+2p-1}$
\begin{equation} \label{eq:k5}
f_{k+2p-1}\notin E(G).
\end{equation}
From 
\eqref{eq:k3}, 
\eqref{eq:k1}, 
and \eqref{eq:k5}, 
we see that the set
$\{v_{k+1},v_{k+3},\dots,v_{k+2p-1}\}$ is separated
from its complement in the connected spanning subgraph $G$. 
This is a  contradiction.
\end{proof}

\begin{lem} \label{lem:2-3}
Let $G$ be a nontrivial connected spanning convex subgraph of $C_n^2$.
If $E(G)$ contains no frame, then $n$ is odd, and $G=S_{n,\frac{n-1}{2},j}$ for some integer $j$ with $0\leq j\leq n-1$.
\end{lem}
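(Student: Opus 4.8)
The plan is to analyze the structure of $E(G)$ under the hypothesis that it contains no frame, so $E(G)\subseteq\Window$. Since $G$ is a spanning subgraph on vertex set $\mathbb{Z}_n$ and is connected, every vertex must be covered by at least one window. A window $f_i=\{v_i,v_{i+2}\}$ joins two vertices of the same parity class (when $n$ is even), so if $n$ is even the window graph $(\mathbb{Z}_n,\Window)$ is disconnected (two cycles on the even- and odd-indexed vertices), and hence $G$, being a spanning subgraph of it, cannot be connected. This forces $n$ to be odd. When $n$ is odd, $(\mathbb{Z}_n,\Window)$ is a single $n$-cycle $v_0,v_2,v_4,\dots,v_{2(n-1)}=v_{n-2},v_0$ traversed in steps of $2$; a connected spanning subgraph of an $n$-cycle is either the whole cycle or the cycle with exactly one edge deleted.

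Next I would rule out $E(G)=\Window$ itself: that is the trivial connected spanning subgraph $(\mathbb{Z}_n,\Window)$ excluded by hypothesis. Therefore exactly one window is missing, say $f_j\notin E(G)$ while all other windows lie in $E(G)$. I then need to check that this $G$ is exactly $S_{n,\frac{n-1}{2},j}$. From the definition, with $k=\frac{n-1}{2}$ we have $2k+1=n$, so $f_{j+2k+1}=f_{j+n}=f_j$, and the window part of the escape route $ES(n,\frac{n-1}{2},j)$ is just $\{f_j\}$; meanwhile $\{e_{j+1},\dots,e_{j+2k+1}\}=\{e_{j+1},\dots,e_{j+n}\}$ runs over all $n$ frames, i.e.\ all of $\Frame$. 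Hence $E(S_{n,\frac{n-1}{2},j})=E(C_n^2)\setminus(\Frame\cup\{f_j\})=\Window\setminus\{f_j\}$, which coincides with $E(G)$.

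Finally I must verify the range constraint $0\le k\le\lceil\frac{n-2}{2}\rceil$ is met by $k=\frac{n-1}{2}$: for odd $n$, $\frac{n-1}{2}=\lceil\frac{n-2}{2}\rceil$, so this is the extremal allowed value of $k$ and the statement is consistent with the earlier definition. One should also confirm $S_{n,\frac{n-1}{2},j}$ is genuinely convex and nontrivial so that the conclusion is not vacuous, but convexity of all $S_{n,k,j}$ was already asserted in Section~\ref{sec:2}, and nontriviality is clear since it is a proper spanning subgraph of $C_n^2$ that is not $(\mathbb{Z}_n,\Window)$.

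**The main obstacle** I anticipate is not conceptual but a matter of careful bookkeeping: pinning down that "exactly one window missing, at position $j$" matches the indices in the definition of $ES(n,k,j)$ — in particular checking that the two apparently distinct windows $f_j$ and $f_{j+2k+1}$ in the escape route collapse to a single window precisely when $k=\frac{n-1}{2}$, and that the frame indices $e_{j+1},\dots,e_{j+2k+1}$ sweep out all of $\Frame$ without repetition modulo $n$. Everything else is a short structural argument about spanning subgraphs of a cycle.
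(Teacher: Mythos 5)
Your proof is correct and follows essentially the same route as the paper: observe that $E(G)\subseteq\Window$, that connectivity forces $n$ odd (so the window graph is a single $n$-cycle), and that nontriviality plus connectivity force exactly one window to be missing, which is precisely $E(S_{n,\frac{n-1}{2},j})$. You additionally spell out the parity argument and the index bookkeeping identifying $\Window\setminus\{f_j\}$ with $E(S_{n,\frac{n-1}{2},j})$, details the paper leaves implicit.
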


\begin{proof}
By the assumption, $E(G)$ consists only of windows.
Since $G$ is connected, $n$ is odd.
Since $G$ is nontrivial, 
\(
|E(G)|\leq n-1.
\)
Since $G$ is connected, $|E(G)|\geq n-1$.
Therefore, $|E(G)|= n-1$.
Then there exists $j$ such that $E(G)=\Window\setminus\{f_j\}=E(S_{n,\frac{n-1}{2},j})$.
This proves $G=S_{n,\frac{n-1}{2},j}$.
\end{proof}

\begin{lem} \label{lem:2-2}
Let $G$ be a nontrivial connected spanning convex subgraph of $C_n^2$.
If $E(G)$ contains a frame, then $G=S_{n,k,j}$ for some integers $j,k$ with $0\leq j\leq n-1$, $0\leq k\leq\lfloor\frac{n-2}{2}\rfloor$.
\end{lem}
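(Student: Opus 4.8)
The plan is to show that the presence of at least one frame in $E(G)$, combined with convexity and connectedness, forces the edge set to have exactly the shape of an escape route removed from $C_n^2$. First I would set up the following dichotomy based on how many windows are missing from $E(G)$. Since $G$ contains a frame, say $e_\ell\in E(G)$, convexity with respect to $T_\ell$ and $T_{\ell-1}$ immediately propagates: knowing a frame is present, I want to understand the maximal ``run'' of consecutive frames in $E(G)$. The key structural tool is Lemma~\ref{lem:pn}: whenever two frames $e_{k-1}$ and $e_{k+2p}$ are present together with the alternating windows $f_k,f_{k+2},\dots,f_{k+2p-2}$ between them, the entire block of intermediate frames is forced in. So I would first argue that the frames present in $E(G)$, together with the forced windows, organize the cycle $\mathbb{Z}_n$ into arcs.

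Next I would analyze which windows can be absent. If $f_i\notin E(G)$, then by convexity with respect to $T_i$ we have $|T_i\cap E(G)|\le 1$, so at most one of $e_i,e_{i+1}$ lies in $E(G)$; similarly $T_{i-1}$-convexity (if the relevant window $f_{i-1}$ is also absent) controls $e_{i-1},e_i$. The goal is to show that the set of missing windows $\{f_i : f_i\notin E(G)\}$ is either empty (giving $G=C_n^2$, trivial, excluded) or consists of exactly two windows $f_j$ and $f_{j+2k+1}$ at ``odd distance'' apart, and that between them exactly the frames $e_{j+1},\dots,e_{j+2k+1}$ are missing while everywhere else all edges of $C_n^2$ are present. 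This is precisely the definition of $ES(n,k,j)$. I expect the main obstacle here is ruling out configurations with more than two missing windows, or with two missing windows at even distance: one must use connectedness of $G$ (as in the contradiction at the end of Lemma~\ref{lem:pn}, where an independent set of odd-indexed vertices gets separated) to eliminate these, and one must carefully check that a missing window does not force further missing frames in an inconsistent way.

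Concretely, I would proceed as follows. Let $F=\{i : f_i\notin E(G)\}$ be the set of indices of missing windows. \textbf{Case $F=\emptyset$:} then all windows are present; convexity with respect to every $T_i$ forces all frames present too (since $|T_i\cap E(G)|\ge 1$ already from $f_i$, hence $=3$), so $G=C_n^2$, which is trivial — contradiction. \textbf{Case $|F|=1$, say $F=\{j\}$:} then all windows except $f_j$ are present, forcing (by $T_i$-convexity for $i\ne j$) all frames except possibly $e_j$ and $e_{j+1}$; connectedness and a parity/counting check shows this forces $n$ odd and reduces to the situation of Lemma~\ref{lem:2-3}, or else one of $e_j,e_{j+1}$ must also drop out, landing in the next case with $k=0$. \textbf{Case $|F|\ge 2$:} pick two ``consecutive'' missing windows $f_j, f_{j'}$ (consecutive among missing windows, going around the cycle) and apply Lemma~\ref{lem:pn} to the arc strictly between them where all windows are present; this forces all frames in that arc, so $G$ restricted to that arc is a full strip. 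Doing this for every gap between consecutive elements of $F$ shows that outside the missing windows everything is present except possibly short blocks of frames adjacent to the missing windows.

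Finally I would pin down the endpoints: near a missing window $f_j$, convexity with respect to $T_j$ says at most one of $e_j,e_{j+1}$ survives, and then I chase this constraint along with the forced-frame conclusions from Lemma~\ref{lem:pn} to show that in fact exactly a contiguous block $e_{j+1},\dots,e_{j+2k+1}$ is removed for a single $k\ge 0$, that there are exactly two missing windows $f_j$ and $f_{j+2k+1}$ (the odd offset being forced by the alternating-parity structure of the windows that Lemma~\ref{lem:pn} exploits), and that the constraint $0\le k\le\lfloor\frac{n-2}{2}\rfloor$ comes from the fact that the two tails plus the central strip must fit in $n$ vertices without wrapping (the case $k=\frac{n-1}{2}$ being exactly the no-frame case already handled in Lemma~\ref{lem:2-3}). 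At that point $E(G)=E(C_n^2)\setminus ES(n,k,j)$, i.e.\ $G=S_{n,k,j}$, completing the proof. The delicate point throughout is the parity bookkeeping — windows $f_i$ connect vertices of the same parity class only along even/odd residues locally, so a wrong-parity configuration of missing edges disconnects an independent set of vertices exactly as in Lemma~\ref{lem:pn}; making this disconnection argument clean in the general $|F|\ge 2$ case is where I expect to spend the most effort.
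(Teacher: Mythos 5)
There is a genuine gap, and it sits exactly where the real work of this lemma lives. Your key step in the case $|F|\ge 2$ is the claim that applying Lemma~\ref{lem:pn} to the arc between two consecutive missing windows ``forces all frames in that arc.'' But the hypothesis of Lemma~\ref{lem:pn} includes the presence of the two \emph{bounding frames} $e_{k-1}$ and $e_{k+2p}$; with only the alternating windows present, nothing is forced. And the conclusion you want is in fact false: in $S_{n,k,j}$ itself (with $k\ge1$) the two missing windows are $f_j$ and $f_{j+2k+1}$, every window strictly between them is present, and yet every frame $e_{j+1},\dots,e_{j+2k+1}$ in that arc is absent. So the arc between two consecutive missing windows is precisely where a full block of frames may be (and in the target structure, is) missing, and your subsequent assertion that ``outside the missing windows everything is present except possibly short blocks of frames adjacent to the missing windows'' does not follow. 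The remaining content --- that there are exactly two missing windows, at odd offset, with exactly the contiguous block $e_{j+1},\dots,e_{j+2k+1}$ of frames removed --- is deferred to ``I chase this constraint,'' which is the entire difficulty. A smaller but real error of the same kind occurs in your case $F=\emptyset$: from $f_i\in E(G)$ alone you conclude $|T_i\cap E(G)|\ge1$ ``hence $=3$,'' but convexity permits $|T_i\cap E(G)|=1$; one edge of a triangle forces nothing, and the forcing must instead be propagated from the frame assumed to exist. (Your $|F|=1$ case is also muddled: dropping a frame does not move you to the case $|F|\ge2$, since $F$ counts missing windows; what actually happens is that $|F|=1$ together with a present frame is outright contradictory.)

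The paper avoids all of this by anchoring the argument on a \emph{maximal run of consecutive present frames} $e_0,\dots,e_{l-1}$ rather than on the set of missing windows. Maximality immediately supplies the two missing windows $f_{-1}$ and $f_{l-1}$ flanking the run, and --- crucially --- it supplies present frames at the ends of every arc to which Lemma~\ref{lem:pn} is later applied. One then defines $s$ and $t$ as the lengths of the maximal alternating-window tails growing out of the two ends of the run, uses Lemma~\ref{lem:pn} plus a minimal-counterexample argument to show no frame survives along either tail, and finally uses connectedness to force the tails to meet, giving $s=t=\frac{n-l-1}{2}$ and hence $E(G)=E(S_{n,\frac{n-l-1}{2},l-1})$. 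If you want to keep your window-centric organization, you would need to first establish which frames bound each inter-window arc before invoking Lemma~\ref{lem:pn}; as written, the proposal asserts a false intermediate statement and omits the connectedness/counting argument that pins down the escape route.
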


\begin{proof}
If $\Frame\subset E(G)$, then it is easy to see that $G=C_n^2$, contradicting the assumption that $G$ is nontrivial.
Since $\Frame\cap E(G)\neq\emptyset$, 
there exists $i,l$ with $0\leq i\leq n-1$, $1\leq l\leq n-1$ satisfying $\{e_i,e_{i+1},\dots,e_{i+l-1}\}\subseteq E(G)$ and $e_{i-1},e_{i+l}\notin E(G)$.
Without loss of generality, we may assume that $i=0$.
In this case, we have
\begin{align}
\{e_0,e_1,\dots,e_{l-1}\}&\subseteq E(G),  \label{eq:il1}\\
e_{-1}&\notin E(G),  \label{eq:il1-1}\\
e_l&\notin E(G).  \label{eq:il1-12}
\end{align}
Since $G$ is convex with respected to $T_j$ ($0\leq j\leq l-2$),
\eqref{eq:il1} implies
\begin{equation} \label{eq:il11}
f_0,f_1,\dots,f_{l-2}\in E(G).
\end{equation}
Since $G$ is convex with respected to $T_{-1}$, \eqref{eq:il1} and \eqref{eq:il1-1} imply
\begin{equation} \label{eq:il12}
f_{-1}\notin E(G).
\end{equation}
Since $G$ is convex with respect to $T_{l-1}$, \eqref{eq:il1} and \eqref{eq:il1-12} imply
\begin{equation} \label{eq:il31}
f_{l-1}\notin E(G).
\end{equation}

Let $s$ and $t$ be the largest non-negative integers such that
\begin{equation} \label{eq:il2}
f_{-2},f_{-4},\dots,f_{-2s}\in E(G),
\end{equation}
and
\begin{equation} \label{eq:il3}
f_l,f_{l+2},\dots,f_{l+2t-2}\in E(G),
\end{equation}
respectively.
Then, $f_{-2s-2}\notin E(G)$ and $f_{l+2t}\notin E(G)$.

We show that
\begin{equation} \label{eq:tn2}
e_l,e_{l+1},\dots,e_{l+2t}\notin E(G)\text{ and }t<\frac{n-l}{2},
\end{equation}
\begin{equation} \label{eq:sn2}
e_{-1},e_{-2},\dots,e_{-2s-1}\notin E(G)\text{ and }s<\frac{n-l}{2}.
\end{equation}
Assume that there exists an integer $m$ with $0\leq m\leq 2t$ and that $e_{l+m}\in E(G)$.
We may choose minimal such $m$.
By \eqref{eq:il1-12}, we have $m>0$.
If $m$ is odd, then by \eqref{eq:il3} and by the convexity of $G$, 
$T_{l+m-1}\subseteq E(G)$.
Therefore, $e_{l+m-1}\in E(G)$. This contradicts the minimality of $m$. 
If $m$ is even, then by \eqref{eq:il1} and \eqref{eq:il3}, we have 
$\{e_{l-1},f_l,f_{l+2},\dots,f_{l+m-1},e_{l+m}\}\subseteq E(G)$.
Then, by Lemma~\ref{lem:pn}, 
we have $e_{l+m-1}\in E(G)$, again contradicting the minimality of $m$.
Therefore, \eqref{eq:tn2} holds.
Similarly, we can prove \eqref{eq:sn2}.

Let $K=\{v_{-2s},v_{-2s+2},\dots,v_0,v_1,\dots,v_l,v_{l+2},\dots,v_{l+2t}\}$.
If $K\neq\mathbb{Z}_n$,
then by \eqref{eq:il12}, \eqref{eq:il31}, \eqref{eq:tn2} and \eqref{eq:sn2}, $G$ is disconnected.
This is a  contradiction. 
Therefore, $K=\mathbb{Z}_n$, and in particular,  $s+l+1+t=|K|=n$.
From \eqref{eq:tn2} and \eqref{eq:sn2}, $s=t=\frac{n-l-1}{2}$.
Then, from \eqref{eq:il2}, \eqref{eq:il3} and \eqref{eq:tn2}, we have
\begin{align*}
\{f_{l+1},f_{l+3},\dots,f_{n-2}\}&\subseteq E(G),
\\
\{f_l,f_{l+2},\dots,f_{n-3}\}&\subseteq E(G),
\\
e_l,e_{l+1},\dots,e_{n-1}&\notin E(G),
\end{align*}
respectively.
Together with
\eqref{eq:il1}, \eqref{eq:il11},
\eqref{eq:il12} and
\eqref{eq:il31},
these imply
$E(G)=E(S_{n,\frac{n-l-1}{2},l-1})$.
This proves $G=S_{n,\frac{n-l-1}{2},l-1}$.
\end{proof}

\begin{thm} \label{th:4-4}
Let $G$ be a nontrivial connected spanning convex subgraph of $C_n^2$.
Then
there exists integers $j,k$ with $0\leq j\leq n-1$,
$0\leq k\leq\lceil\frac{n-2}{2}\rceil$ such that $G=S_{n,k,j}$.
\end{thm}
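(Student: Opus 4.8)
The plan is to derive Theorem~\ref{th:4-4} as an immediate consequence of the two preceding lemmas, Lemma~\ref{lem:2-3} and Lemma~\ref{lem:2-2}, by a dichotomy on whether $E(G)$ contains a frame. These two lemmas between them cover every nontrivial connected spanning convex subgraph, so the only real work is to check that the ranges of $(k,j)$ they produce fit inside the range claimed in the theorem, namely $0\le j\le n-1$ and $0\le k\le\lceil\frac{n-2}{2}\rceil$.

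\begin{proof}
If $E(G)$ contains a frame, then by Lemma~\ref{lem:2-2} we have $G=S_{n,k,j}$ for some integers $j,k$ with $0\le j\le n-1$ and $0\le k\le\lfloor\frac{n-2}{2}\rfloor\le\lceil\frac{n-2}{2}\rceil$, which is the desired conclusion. If $E(G)$ contains no frame, then by Lemma~\ref{lem:2-3}, $n$ is odd and $G=S_{n,\frac{n-1}{2},j}$ for some integer $j$ with $0\le j\le n-1$. Since $n$ is odd, $\frac{n-1}{2}=\lceil\frac{n-2}{2}\rceil$, so again $G=S_{n,k,j}$ with $0\le j\le n-1$ and $0\le k\le\lceil\frac{n-2}{2}\rceil$. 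In either case the claim holds.
\end{proof}

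The only subtlety, and the step I would be most careful about, is the interplay between the floor and the ceiling: Lemma~\ref{lem:2-2} yields $k\le\lfloor\frac{n-2}{2}\rfloor$ while Lemma~\ref{lem:2-3} yields the single extra value $k=\frac{n-1}{2}$, which for odd $n$ equals $\lceil\frac{n-2}{2}\rceil=\lfloor\frac{n-2}{2}\rfloor+1$; so the case $E(G)$ containing no frame is precisely what is needed to reach the upper end $\lceil\frac{n-2}{2}\rceil$ of the stated range, and it occurs only for odd $n$, exactly when that upper value is an integer. For even $n$, $\lfloor\frac{n-2}{2}\rfloor=\lceil\frac{n-2}{2}\rceil$ and the frame-free case does not arise, so there is nothing lost. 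No genuine obstacle remains beyond this bookkeeping, since the structural content has already been established in the lemmas.
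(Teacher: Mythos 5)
Your proof is correct and is essentially the paper's own argument: the paper also deduces the theorem immediately from Lemmas~\ref{lem:2-3} and \ref{lem:2-2} via the same dichotomy on whether $E(G)$ contains a frame. Your explicit check that $\lfloor\frac{n-2}{2}\rfloor\le\lceil\frac{n-2}{2}\rceil$ and that $\frac{n-1}{2}=\lceil\frac{n-2}{2}\rceil$ for odd $n$ is the only bookkeeping the paper leaves implicit.
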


\begin{proof}
This is immediate from Lemmas \ref{lem:2-3} and \ref{lem:2-2}.
\end{proof}

\section{Enumerating spanning trees of the square cycles}\label{sec:4}

In this section, we prove our second main result which states that every spanning tree of $C_n^2$ is contained in a unique connected spanning convex subgraph. As a consequence, we obtain an alternative proof of the formula for the number of spanning trees of $C_n^2$.
Our method is a combinatorial formulation of the topological proof given in \cite{Kleitman}. 
The tool we use is the theory of graph homotopy. We refer the reader to \cite{Lewis} for the precise definition of the homotopy group. Roughly speaking, the homotopy group $\pi(G,v_0)$ of the graph $G$ with respect to a vertex $v_0$ is the group formed by equivalence classes of circuits through $v_0$. It contains the subgroup $\pi(G,v_0,3)$ which is ``generated'' by triangles. It is clear that $\pi(G,v_0)=\pi(G,v_0,3)$ if $G$ is a tree, strip graph, or strip graph with tails, while
$\pi(G,v_0)\neq\pi(G,v_0,3)$ if $G$ is a cycle of length at least $4$ or $G=C_n^2$ with $n\geq7$.

\begin{thm} \label{th:gsnkj}
Let $n$ be an integer with $n\geq5$.
For every is a spanning tree $G$ of $C_n^2$,
there exists a unique nontrivial connected spanning convex subgraph $H$ of $C_n^2$ such that $E(G)\subseteq E(H)$.
More precisely,
\begin{equation}\label{eq:dunion}
T_{C_n^2}=
\bigcup_{k=0}^{\lceil\frac{n-2}{2}\rceil}
\bigcup_{j=0}^{n-1}
T_{S_{n,k,j}} \quad \text{(disjoint).}
\end{equation}
\end{thm}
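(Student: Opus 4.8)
The plan is to prove the two assertions in tandem: first that every spanning tree $G$ of $C_n^2$ is contained in \emph{some} nontrivial connected spanning convex subgraph, and second that this subgraph is unique; the displayed equation \eqref{eq:dunion} then follows, since the right-hand side is a subset of $T_{C_n^2}$ by Theorem~\ref{th:4-4} (each $S_{n,k,j}$ is a connected spanning subgraph, hence has spanning trees, all of which are spanning trees of $C_n^2$), and the union is disjoint precisely by uniqueness. So the whole theorem reduces to: each spanning tree sits inside a unique nontrivial connected spanning convex subgraph, which by Theorem~\ref{th:4-4} must be of the form $S_{n,k,j}$.

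For existence, I would start from a spanning tree $G$ and enlarge it to a convex subgraph. Concretely, for each triangle $T_i$ with $|T_i\cap E(G)|=2$, convexity fails at $T_i$; I would like to ``repair'' this. The natural move is a homotopy operation: replacing two edges of a triangle by the third changes a circuit to an equivalent one, so it does not affect the homotopy class of any circuit through $v_0$. Here is where I expect the graph-homotopy machinery of \cite{Lewis} to do the real work. The key invariant is $\pi(C_n^2,v_0)/\pi(C_n^2,v_0,3)$, which is nontrivial (the remark before the theorem stresses $\pi(G,v_0)\neq\pi(G,v_0,3)$ for $C_n^2$ with $n\geq 7$, and one handles $n=5,6$ directly). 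A spanning tree $G$ has $\pi(G,v_0)=\pi(G,v_0,3)$, so $G$ represents the ``trivial'' coset. I would argue that starting from $G$ and repeatedly performing triangle-repairs (each time a triangle has exactly two edges present, add the third) one arrives at a convex subgraph $H\supseteq E(G)$ which still represents the trivial coset; since $C_n^2$ itself does not represent the trivial coset, $H\neq C_n^2$, and since $G$ is a tree of $\mathbb Z_n$ using at least one frame unless $n$ is odd and $G$ uses only windows — in the latter case $H$ already is $(\mathbb Z_n,\Window)\setminus\{f_j\}$ — $H$ is a nontrivial connected spanning convex subgraph, hence $H=S_{n,k,j}$ for some $j,k$ by Theorem~\ref{th:4-4}. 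One must check that the repair process terminates (each step strictly increases $|E(H)|$, which is bounded) and that connectivity is preserved (adding edges never disconnects), both of which are routine.

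For uniqueness, suppose $E(G)\subseteq E(H_1)\cap E(H_2)$ with $H_1=S_{n,k_1,j_1}$, $H_2=S_{n,k_2,j_2}$ distinct nontrivial connected spanning convex subgraphs. The complements $ES(n,k_1,j_1)$ and $ES(n,k_2,j_2)$ are the two escape routes; I would show that $G$ is forced to ``cross'' each escape route in a way incompatible with being acyclic unless the routes coincide. More combinatorially: $H_1\cap H_2$ (edgewise intersection) still contains $G$, so $H_1\cap H_2$ is a connected spanning subgraph of $C_n^2$; but removing two distinct escape routes from $C_n^2$ leaves a graph which I claim is disconnected (the two escape routes, being ``parallel cuts'' in the planar picture of \cite[Fig.~4]{Kleitman}, separate $\mathbb Z_n$ into two arcs), contradicting that it contains a spanning tree. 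Making ``disconnected'' precise is the crux: I would use the explicit description of $ES(n,k,j)$ as a consecutive block of frames $e_{j+1},\dots,e_{j+2k+1}$ together with the two flanking windows $f_j,f_{j+2k+1}$, and check by an argument like the one in Lemma~\ref{lem:2-2} (separating an independent set of odd-indexed vertices from its complement) that deleting two such blocks disconnects $C_n^2$. The main obstacle, then, is twofold: pinning down exactly the homotopy-theoretic statement from \cite{Lewis} that guarantees the trivial coset cannot reach $C_n^2$ under triangle-repairs (the existence half), and the combinatorial case analysis showing two distinct escape routes disconnect the graph (the uniqueness half) — the latter being essentially a cut-counting argument but requiring care with the wraparound indexing modulo $n$ and with the boundary cases $k=0$ and $k=\lceil\frac{n-2}{2}\rceil$.
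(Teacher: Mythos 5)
Your existence argument is the paper's argument: start from the spanning tree, note $\pi(G,v_0)=\pi(G,v_0,3)$, repair non-convex triangles one at a time (each repair preserves the equality $\pi=\pi_3$ and strictly increases the edge set, so the process terminates in a connected spanning convex subgraph $H$), and then rule out the two trivial convex subgraphs because $\pi(H,v_0)\neq\pi(H,v_0,3)$ for $C_n^2$ ($n\geq7$) and for the window $n$-cycle, so Theorem~\ref{th:4-4} forces $H=S_{n,k,j}$; the paper likewise delegates the one nontrivial homotopy fact (that adding the third edge of a triangle preserves $\pi=\pi_3$) to \cite{Lewis} and checks $n=5,6$ by hand, so you are not missing anything the paper supplies. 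Where you diverge is uniqueness. You take the edgewise intersection $H_1\cap H_2\supseteq G$, which is connected and spanning, and then propose to show directly that deleting two \emph{distinct} escape routes disconnects $C_n^2$ --- a true statement, but you correctly flag it as an unproven crux requiring a wraparound case analysis (overlapping routes, $k=0$, $k=\lceil\frac{n-2}{2}\rceil$, etc.). The paper avoids this entirely with one extra observation you overlooked: the intersection of two convex subgraphs is again convex (if $|T_i\cap E(H_1)|\leq1$ or $T_i\subseteq E(H_1)$, and likewise for $H_2$, the same dichotomy holds for $E(H_1)\cap E(H_2)$), and it is nontrivial since it omits at least one window. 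Hence Theorem~\ref{th:4-4} applies \emph{again} to the intersection, giving $E(S_{n,k'',j''})\subseteq E(S_{n,k_1,j_1})$, i.e.\ $ES(n,k_1,j_1)\subseteq ES(n,k'',j'')$, and containment of one escape route in another forces them to be equal --- a much smaller verification than your disconnection claim. So your proposal is sound in outline, but to be complete you must either carry out the disconnection case analysis you deferred, or (better) add the convexity-of-intersections remark and reuse the classification, which is what the paper does.
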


\begin{proof}
Since the assertion can be verified directly for $n=5$ and $6$, we assume $n\geq7$.
According to Lewis \cite{Lewis}, for a graph $G$ we can define its homotopy group $\pi(G,v_0)$ and the normal subgroup $\pi(G,v_0,3)$ of $\pi(G,v_0)$ generated by the triangles.
Clearly $\pi(G,v_0)$ is the trivial group for the spanning tree $G$ of $C_n^2$, so in particular $\pi(G,v_0)=\pi(G,v_0,3)$ holds.
For a spanning tree $G$ of $C_n^2$ which is not convex with respect to some triangle $T_i$, $\pi(G',v_0)=\pi(G',v_0,3)$ also holds for the graph $G'$ obtained from $G$ by adding the unique missing edge of $T_i$.
This process can be iterated until we reach a convex subgraph containing $G$.
The resulting graph $H$ is a connected spanning convex subgraph $H$ of $C_n^2$, and hence it is one of the graphs classified in Theorem \ref{th:4-4}, or one of the trivial 
connected spanning convex subgraph.
Since
\(\pi(H,v_0)=\pi(H,v_0,3)\)
holds only for nontrivial connected spanning convex subgraph $H$, there exist $j,k$ with $0\leq j\leq n-1$, $0\leq k\leq\lceil\frac{n-2}{2}\rceil$ such that $E(G)\subseteq E(S_{n,k,j})$.

It remains to show that the union
in \eqref{eq:dunion} is disjoint.
Suppose $E(G)\subseteq E(S_{n,k',j'})$ for some $j',k'$ with $0\leq k'\leq \lceil\frac{n-2}{2}\rceil,\;0\leq j'\leq n-1$.
Then the subgraph with edge set $E(S_{n,k,j})\cap E(S_{n,k',j'})$ is a nontrivial connected spanning convex subgraph of $C_n^2$, and hence coincides with $S_{n,k',j'}$ for some $j'',k''$ with $0\leq k''\leq \lceil\frac{n-2}{2}\rceil,\;0\leq j''\leq n-1$.
This implies $E(S_{n,k'',j''})\subseteq E(S_{n,k,j})$ which is possible only when $(j,k)=(j'',k'')$.
Then we have $(j,k)=(j',k')$.
Therefore,  the union
in \eqref{eq:dunion} is disjoint.
\end{proof}

\begin{cor}[Kleitman and Golden \cite{Kleitman}] \label{kleitman}
\[
t(C_n^2)=nF_n^2.
\]
\end{cor}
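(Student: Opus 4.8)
The plan is to prove Corollary~\ref{kleitman} by counting $|T_{C_n^2}|$ via the disjoint union in Theorem~\ref{th:gsnkj}. First I would invoke \eqref{eq:dunion} together with $t(G)=|T_G|$ to write
\[
t(C_n^2)=\sum_{k=0}^{\lceil\frac{n-2}{2}\rceil}\sum_{j=0}^{n-1}t(S_{n,k,j}).
\]
Next I would apply Lemma~\ref{lem:strip_tail}, which gives $t(S_{n,k,j})=t(S_{n-2k})$ independently of $j$, so the inner sum over the $n$ values of $j$ just contributes a factor $n$, yielding
\[
t(C_n^2)=n\sum_{k=0}^{\lceil\frac{n-2}{2}\rceil}t(S_{n-2k}).
\]

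Then I would substitute Lemma~\ref{lem:kleitman1}, $t(S_m)=F_{2m-2}$ for $m\geq2$, being slightly careful about the boundary term: when $n$ is odd, $k$ can reach $\frac{n-1}{2}$ and $n-2k=1$, in which case $S_1$ is a single vertex with $t(S_1)=1$; when $n$ is even, $k$ ranges only up to $\frac{n-2}{2}$ and $n-2k\geq2$ throughout. With the change of variable $m=n-2k$ the sum becomes a sum of Fibonacci numbers $F_{2n-4k-2}$ with $k$ running from $0$ downward, i.e.\ $\sum F_{2(n-2k)-2}$. After reindexing, for $n$ even this is $\sum_{k=0}^{(n-2)/2}F_{4k+2}$ (with $k$ relabelled so the smallest term $F_2$ corresponds to the tail of full length), and for $n$ odd it is $1+\sum_{k=1}^{(n-1)/2}F_{4k}$, where the $1$ is the contribution of $S_1$.

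Finally I would recognize these two expressions as exactly the right-hand sides appearing in Lemma~\ref{fib3}, which identifies them with $F_n^2$ in both parities, giving $t(C_n^2)=nF_n^2$. The only real care needed — and the step most prone to an off-by-one slip — is matching the index ranges of the geometric-looking Fibonacci sums to the ranges in Lemma~\ref{fib3} and correctly accounting for the degenerate tail $S_1$ in the odd case; everything else is a direct chain of the already-established lemmas, so there is no substantive obstacle.
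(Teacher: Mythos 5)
Your proposal is correct and follows essentially the same route as the paper: apply Theorem~\ref{th:gsnkj}, collapse the $j$-sum via Lemma~\ref{lem:strip_tail}, substitute Lemma~\ref{lem:kleitman1} (separating the degenerate $S_1$ term when $n$ is odd, exactly as the paper does with its isolated ``$n+$'' summand), and conclude with Lemma~\ref{fib3}. The reindexing you describe matches the paper's computation.
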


\begin{proof}
\begin{align*}
t(C_n^2)
&=\sum_{k=0}^{\lceil\frac{n-2}{2}\rceil}\sum_{j=0}^{n-1}t(S_{n,k,j}) &&\text{(by Theorem \ref{th:gsnkj})}
\displaybreak[0]\\
&=n\sum_{k=0}^{\lceil\frac{n-2}{2}\rceil}t(S_{n-2k}) &&\text{(by Lemma \ref{lem:strip_tail})}
\displaybreak[0]\\
&=\begin{cases}
n\sum_{k=0}^{(n-2)/2}t(S_{2k+2}) &\text{if $n$ is even,}\\
n+n\sum_{k=1}^{(n-1)/2}t(S_{2k+1}) &\text{if $n$ is odd}
\end{cases}
\displaybreak[0]\\
&=\begin{cases}
n\sum_{k=0}^{(n-2)/2}F_{4k+2} &\text{if $n$ is even,}\\
n(1+\sum_{k=1}^{(n-1)/2}F_{4k}) &\text{if $n$ is odd}
\end{cases} &&\text{(by Lemma \ref{lem:kleitman1})}
\displaybreak[0]\\
&=nF_n^2 &&\text{(by Lemma \ref{fib3}).}
\end{align*}
\end{proof}


\end{document}